\newtheorem{thm}{Theorem}[section]
\newtheorem{cor}[thm]{Corollary}
\newtheorem{prop}[thm]{Proposition}
\newtheorem{lem}[thm]{Lemma}
\def\zt{\zeta}
\def\si{\sigma}
\def\Ga{\Gamma}
\def\Si{\Sigma}
\def\C{\mathcal C}
\def\I{\mathcal I}
\def\Li{\operatorname{Li}}
\def\Re{\operatorname{Re}}
\begin{document}
\title{Sums of Products of Riemann Zeta Tails}
\author{Michael E. Hoffman\\
\small U. S. Naval Academy\\[-0.8ex]
\small Annapolis, MD 21402 USA\\[-0.8ex]
\small \texttt{meh@usna.edu}}
\date{February 10, 2016\\
\small AMS Subject Classification: Primary 11M06; Secondary 11M32\\
\small Keywords: Riemann zeta function, multiple zeta values, tails}
\maketitle
\begin{abstract}
A recent paper of O. Furdui and C. V\u alean proves some results
about sums of products of ``tails'' of the series for the Riemann
zeta function.  We show how such results can be proved with weaker
hypotheses using multiple zeta values, and also show how they can be 
generalized to products of three or more such tails.
\end{abstract}
\section{Introduction}
In a recent paper, O. Furdui and C. V\u alean \cite{FV} prove some results
on sums of products of the ``tails'' of the Riemann zeta function, i.e.,
\[
\zt(p)-\sum_{i=1}^n \frac1{i^p} =\sum_{i=n+1}^\infty \frac1{i^p} .
\]
In fact, as we show in \S3 below, there is a simple general result 
(Theorem \ref{sym} below) for the sum
\begin{equation}
\label{spq}
\sum_{n=1}^\infty\left(\zt(p)-\sum_{i=1}^n\frac1{i^p}\right)
\left(\zt(q)-\sum_{j=1}^n\frac1{j^q}\right)
\end{equation}
in terms of the multiple zeta values $\zt(i_1,\dots,i_k)$ defined by
\begin{equation}
\label{mzv}
\zt(i_1,\dots,i_k)=\sum_{n_1>n_2>\dots>n_k\ge 1}\frac1{n_1^{i_1}\cdots n_k^{i_k}}
\end{equation}
(where for convergence $i_1+\dots+i_j>j$ for $j=1,\dots,k$).
The only hypothesis on our result is that $p$ and $q$ are real numbers
exceeding 1 whose sum exceeds 3.
We also show how an integral formula given in \cite{FV} for the sum 
(\ref{spq}) in the case $p=q+1$ can be generalized.
Further, our Theorem \ref{sym} can be generalized to products of three or more
tails, as we show in \S4.
We begin in \S2 with a brief review of multiple zeta values.
\section{Multiple zeta values}
The multiple zeta value (\ref{mzv}) is said to have depth $k$
and weight $i_1+\dots+i_k$. 
Multiple zeta values of general depth were introduced independently
by the author \cite{H} and D. Zagier \cite{Z}, but for depth two they 
were already studied by Euler \cite{E}.
Two results due to Euler are the ``sum theorem''
\begin{equation}
\label{2sumt}
\sum_{i=1}^{n-2}\zt(n-i,i)=\zt(n)
\end{equation}
for integers $n\ge 3$, and the reduction of double zeta values
of the form $\zt(n,1)$, $n\ge 2$ an integer:
\begin{equation}
\label{n1red}
\zt(n,1)=\frac{n}2\zt(n+1)-\frac12\sum_{j=2}^{n-1}\zt(j)\zt(n+1-j) ,
\end{equation}
where an empty sum is understood to be nil.
The following result on depth two multiple zeta values includes
the relation (\ref{2sumt}) as a special case.
If $p,q$ are positive integers with $n=p+q\ge 3$, then
\begin{equation}
\label{binom}
\zt(n)=\sum_{i=p+1}^{n-1}\binom{i-1}{p-1}\zt(i,n-i)+
\sum_{i=q+1}^{n-1}\binom{i-1}{q-1}\zt(i,n-i) .
\end{equation}
Finally, we note that
\begin{equation}
\label{prod}
\zt(n)\zt(m)=\zt(n,m)+\zt(m,n)+\zt(m+n)
\end{equation}
by multiplying series.
\par
The generalization of the sum theorem to arbitrary depth, i.e.,
\begin{equation}
\label{sumt}
\sum_{i_1\ge 2,\ i_j\ge 1,\ i_1+\dots+i_k=n}\zt(i_1,i_2,\dots,i_k)=\zt(n)
\end{equation}
for $n>k\ge 2$, was conjectured by C. Moen (see \cite{H}) and
proven independently by A. Granville \cite{G} and D. Zagier.
Another result for general depth, the ``duality'' property,
is a bit more involved to state.  Let $\Si$ be the function that
assigns to a sequence of positive integers $(i_1,\dots,i_k)$
its sequence of partial sums.  If $\I_n$ is the set of strictly
increasing sequences of positive integers whose last term is
at most $n$, define functions $R_n$ and $C_n$ on $\I_n$ by
\begin{align*}
R_n(a_1,\dots,a_k)&=(n+1-a_k,n+1-a_{k-1},\dots,n+1-a_1)\\
C_n(a_1,\dots,a_k)&=\text{complement of $\{a_1,\dots,a_k\}$
in $\{1,\dots,n\}$, arranged} \\
&\text{in increasing order.}
\end{align*}
Then $\tau=\Si^{-1}R_nC_n\Si$ is a function that takes sequences
$(i_1,\dots, i_k)$ with $i_1>1$ and $i_1+\dots+i_k=n$ to
sequences of the same type, and the duality property is
\begin{equation}
\label{dual}
\zt(\tau(I))=\zt(I) .
\end{equation}
For example, $\zt(2,1,2)=\zt(2,3)$.  This result follows from
the representation of multiple zeta values as iterated integrals
(for which see \cite{Z}).
\par
The following remarks apply only to multiple zeta values 
$\zt(i_1,\dots,i_k)$ whose arguments $i_j$ are all positive integers.  
Every known relation among multiple zeta values with rational
coefficients is homogeneous by weight.
It is known that every multiple zeta value of weight at most 7 can 
be expressed as a polynomial in the ordinary zeta values $\zt(n)$, 
$n=2,3,\dots$ with rational coefficients.
There is also a result going back to Euler \cite{E} that any
double zeta value $\zt(m,n)$ with $m+n$ odd can be so expressed; in fact
\begin{multline}
\label{eulerred}
\zt(m,n)=\frac12\left[(-1)^m\binom{m+n}{n}-1\right]\zt(m+n)
+\frac{1+(-1)^m}2\zt(m)\zt(n)\\
-(-1)^m\sum_{j=1}^{\frac{m+n-1}2}\left[\binom{2j-2}{m-1}+\binom{2j-2}{n-1}\right]
\zt(2j-1)\zt(m+n-2j+1)
\end{multline}
for $m+n$ odd and $n>1$, where it is understood that $\binom{p}{q}$
is nil if $p<q$.  (If $n=1$ then equation (\ref{n1red}) applies.)
But in even weights 8 and higher there appear to be double zeta values
that cannot be expressed as rational polynomials in the ordinary zeta 
values, e.g., $\zt(2,6)$.
This phenomenon shows up in \cite[Table 9]{Fr}, where a ``new constant'' 
$\kappa_{16}$ (which is $-\zt(2,6)$ by equation (\ref{fr}) below) 
must be introduced to give formulas for weight-8 quantities.
\par
While most of the study of multiple zeta values has concentrated
on the case where the arguments are positive integers, various authors 
have studied the behaviour of $\zt(i_1,\dots,i_k)$ when the $i_k$ are 
allowed to be complex.  For a survey see \cite{M};
in particular, the series (\ref{mzv}) converges absolutely provided
$\Re(i_1+\dots+i_j)>j$ for $j=1,\dots,k$.
\section{Double products}
Our result for products of two tails is as follows.
\begin{thm}
\label{sym}
If $p,q>1$ and $p+q>3$, then
\begin{multline*}
\sum_{n=1}^\infty\left(\zt(p)-\sum_{j=1}^n\frac1{j^p}\right)
\left(\zt(q)-\sum_{k=1}^n\frac1{k^q}\right)=\\
\zt(p,q-1)+\zt(q,p-1)+\zt(p+q-1)-\zt(p)\zt(q) .
\end{multline*}
\end{thm}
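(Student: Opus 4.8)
The plan is to expand each tail as a tail series, interchange the order of summation, and recognize the resulting double sum as a combination of double and single zeta values, with a final appeal to the stuffle relation (\ref{prod}).

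First I would write $\zt(p)-\sum_{j=1}^n j^{-p}=\sum_{j>n}j^{-p}$, and likewise for $q$, so that the left-hand side becomes
\[
S=\sum_{n=1}^\infty\left(\sum_{j>n}\frac1{j^p}\right)\left(\sum_{k>n}\frac1{k^q}\right)
=\sum_{n\ge1}\sum_{j>n}\sum_{k>n}\frac1{j^pk^q}.
\]
Every term is nonnegative, so by Tonelli's theorem the summations may be reordered freely once one knows the total is finite, and the estimate $\sum_{j>n}j^{-p}=O(n^{1-p})$ together with $p+q>3$ gives exactly that. Summing over $n$ first, the index $n$ runs over $1\le n<\min(j,k)$, which contributes the factor $\min(j,k)-1$, so
\[
S=\sum_{j,k\ge1}\frac{\min(j,k)-1}{j^pk^q},
\]
where the terms with $j=1$ or $k=1$ vanish automatically.

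Next I would split this sum according to whether $j<k$, $j>k$, or $j=k$, writing $\min(j,k)-1$ accordingly and using $\frac{m-1}{m^a}=m^{-(a-1)}-m^{-a}$. The region $j<k$ gives $\sum_{k>j\ge1}\big(j^{-(p-1)}-j^{-p}\big)k^{-q}=\zt(q,p-1)-\zt(q,p)$; the region $j>k$ gives symmetrically $\zt(p,q-1)-\zt(p,q)$; and the diagonal $j=k$ gives $\sum_{j\ge1}\big(j^{-(p+q-1)}-j^{-(p+q)}\big)=\zt(p+q-1)-\zt(p+q)$. Each of these series converges on its own precisely under $p,q>1$ and $p+q>3$ (these are exactly the conditions making $\zt(q,p-1)$ and $\zt(p,q-1)$ absolutely convergent). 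Adding the three contributions,
\[
S=\zt(q,p-1)+\zt(p,q-1)+\zt(p+q-1)-\big(\zt(p,q)+\zt(q,p)+\zt(p+q)\big),
\]
and the stuffle relation (\ref{prod}), $\zt(p)\zt(q)=\zt(p,q)+\zt(q,p)+\zt(p+q)$, rewrites the parenthesized quantity as $\zt(p)\zt(q)$, giving the asserted identity.

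The main point requiring care is the bookkeeping around convergence rather than any deep computation: one must verify that the interchange of summation is legitimate and that the multiple zeta values on the right — in particular $\zt(q,p-1)$ and $\zt(p,q-1)$, whose second arguments can be arbitrarily close to $0$ — actually converge, and it is here that the hypothesis $p+q>3$ (not merely $p+q>2$) is essential. Once those checks are in place, the rest is a routine reindexing of an absolutely convergent triple series.
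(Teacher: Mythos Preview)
Your proof is correct and follows essentially the same route as the paper's: both interchange summation to obtain $\sum_{j,k\ge 1}\frac{\min(j,k)-1}{j^pk^q}$, split into the three regions $j<k$, $j>k$, $j=k$, and then invoke the stuffle relation (\ref{prod}) to collapse $\zt(p,q)+\zt(q,p)+\zt(p+q)$ into $\zt(p)\zt(q)$. If anything, you are more explicit than the paper about the convergence justification (Tonelli, the tail estimate $O(n^{1-p})$, and the role of $p+q>3$ in making $\zt(q,p-1)$ and $\zt(p,q-1)$ converge), which the paper's proof leaves implicit.
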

\begin{proof}
Rearrange
\[
\sum_{n=1}^\infty\left(\zt(p)-\sum_{j=1}^n\frac1{j^p}\right)
\left(\zt(q)-\sum_{k=1}^n\frac1{k^q}\right)=
\sum_{n=1}^\infty\sum_{j=n+1}^\infty\sum_{k=n+1}^\infty\frac1{j^pk^q}
\]
as 
\begin{multline*}
\sum_{j,k\ge 2}\frac{\min\{j,k\}-1}{j^pk^q}=
\sum_{k=2}^\infty\frac{k-1}{k^{p+q}}
+\sum_{j=2}^\infty\frac{j-1}{j^p}\sum_{k=j+1}^\infty\frac1{k^q}
+\sum_{k=2}^\infty\frac{k-1}{k^q}\sum_{j=k+1}^\infty\frac1{j^p}\\
=\sum_{k=1}^\infty\frac{k-1}{k^{p+q}}
+\sum_{j=1}^\infty\sum_{k>j}\frac{j-1}{j^pk^q}
+\sum_{k=1}^\infty\sum_{j>k}\frac{k-1}{j^pk^q}\\
=\zt(p+q-1)-\zt(p+q)+\zt(q,p-1)-\zt(q,p)+\zt(p,q-1)-\zt(p,q)\\
=\zt(p+q-1)+\zt(q,p-1)+\zt(p,q-1)-\zt(p)\zt(q) .
\end{multline*}
\end{proof}
Setting $q=p$ in this result gives the following.
\begin{cor}
\label{psq}
If $p>\frac32$, then 
\[
\sum_{n=1}^\infty\left(\zt(p)-\sum_{j=1}^n\frac1{j^p}\right)^2=
2\zt(p,p-1)+\zt(2p-1)-\zt(p)^2 .
\]
\end{cor}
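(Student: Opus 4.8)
The plan is simply to specialize Theorem \ref{sym} to the case $q=p$; no new machinery is needed. First I would check that the hypotheses transform correctly: with $q=p$ the two conditions $p,q>1$ and $p+q>3$ collapse to the single inequality $2p>3$, that is, $p>\frac32$, which already subsumes $p>1$. So the hypothesis of the corollary is exactly what Theorem \ref{sym} requires.

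Next I would observe that the left-hand side of Theorem \ref{sym}, upon setting $q=p$, reads
\[
\sum_{n=1}^\infty\left(\zt(p)-\sum_{j=1}^n\frac1{j^p}\right)\left(\zt(p)-\sum_{k=1}^n\frac1{k^p}\right),
\]
and since the two factors are now identical this is precisely $\sum_{n=1}^\infty\left(\zt(p)-\sum_{j=1}^n j^{-p}\right)^2$, the quantity appearing in the corollary.

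Finally, on the right-hand side, putting $q=p$ sends each of $\zt(p,q-1)$ and $\zt(q,p-1)$ to $\zt(p,p-1)$, sends $\zt(p+q-1)$ to $\zt(2p-1)$, and sends $\zt(p)\zt(q)$ to $\zt(p)^2$, so the right-hand side becomes $2\zt(p,p-1)+\zt(2p-1)-\zt(p)^2$, as claimed. Since this is a direct substitution into an already-established identity, I anticipate no real obstacle; the one point worth a moment's verification is that $\zt(p,p-1)$ is a legitimate (convergent) double zeta value under the stated hypothesis, which it is, as $p>\frac32$ gives $p>1$ and $p+(p-1)=2p-1>2$, matching the convergence condition $\Re(i_1)>1$, $\Re(i_1+i_2)>2$ recalled in \S2.
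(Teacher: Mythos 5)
Your proposal is correct and is exactly the paper's argument: the paper obtains Corollary \ref{psq} simply by setting $q=p$ in Theorem \ref{sym}, just as you do. Your extra checks (that the hypotheses collapse to $p>\frac32$ and that $\zt(p,p-1)$ converges) are sound and merely make explicit what the paper leaves implicit.
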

If $p\ge 3$ is an integer, we can combine the corollary with equation
(\ref{eulerred}) to get
\begin{multline}
\label{pp3}
\sum_{n=1}^\infty\left(\zt(p)-\sum_{j=1}^n\frac1{j^p}\right)^2=
(-1)^p\binom{2p-1}{p}\zt(2p-1)+((-1)^p+1)\zt(p)\zt(p-1)\\
-2(-1)^p\sum_{j=1}^{p-1}\binom{2j-1}{p-1}\zt(2j-1)\zt(2p-2j)-\zt(p)^2 .
\end{multline}
This is equivalent to the formula given by \cite[Theorem 1(b)]{FS}.
\par
Theorem 1 of \cite{FV} states that for integer $k\ge 3$,
\begin{multline}
\label{kk1}
\sum_{n=1}^\infty\left(\zt(k)-\sum_{i=1}^n\frac1{i^k}\right)
\left(\zt(k+1)-\sum_{j=1}^n\frac1{j^{k+1}}\right)=\\
\frac12\zt(k)^2+\frac12\zt(2k)-\zt(k)\zt(k+1)+
\frac{(-1)^k}{k!}\int_0^1\frac{\log^kx\Li_{k-1}(x)}{1-x}dx ,
\end{multline}
where $\Li_p$ is the polylogarithm
\[
\Li_p(z)=\sum_{j=1}^\infty \frac{z^j}{j^p} .
\]
(In fact this formula holds for $k=2$, as we see below.)
Equation (\ref{kk1}) is based on \cite[Lemma 2.1]{Fr}, which implies
\[
\zt(q,r)+\zt(q+r)=\zt(q)\zt(r)-\frac{(-1)^{r-1}}{(r-1)!}\int_0^1
\frac{\log^{r-1}(x)\Li_q(x)}{1-x}dx
\]
for integers $q,r\ge 2$, or
\begin{equation}
\label{fr}
\zt(r,q)=\frac{(-1)^{r-1}}{(r-1)!}\int_0^1 \frac{\log^{r-1}(x)\Li_q(x)}{1-x}dx
\end{equation}
using the product relation (\ref{prod}).  Indeed equation (\ref{fr})
holds for integers $r\ge 2$, $q\ge 1$, as can be shown directly in 
the case $q=1$ using $\Li_1(x)=-\log(1-x)$.  But by writing equation
(\ref{fr}) in the form
\begin{equation}
\label{hf}
\zt(r,q)=\frac1{\Ga(r)}\int_0^\infty \frac{t^{r-1}\Li_q(e^{-t})}{e^t-1}dt 
\end{equation}
we can do better; we can eliminate the requirement that $r$ and $q$ be
integers.
\begin{lem}
Equation (\ref{hf}) holds for real $r>1$ and $q>2-r$.
\end{lem}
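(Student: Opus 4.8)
The plan is to start from the series $\zt(r,q)=\sum_{m>n\ge1}\frac1{m^rn^q}$ and replace the factor $1/m^r$ by the Euler integral $\frac1{m^r}=\frac1{\Ga(r)}\int_0^\infty t^{r-1}e^{-mt}\,dt$, valid for $r>0$, which recasts the desired identity as
\[
\Ga(r)\zt(r,q)=\sum_{m>n\ge1}\frac1{n^q}\int_0^\infty t^{r-1}e^{-mt}\,dt .
\]
The observation that makes the rest routine is that $1/n^q=n^{-q}>0$ irrespective of the sign of $q$, so every quantity in sight is nonnegative; hence Tonelli's theorem (counting measure on $\{(m,n):m>n\ge1\}$ against Lebesgue measure on $(0,\infty)$) lets me move the summation inside the integral with no hypothesis beyond $r>0$, and as an identity valid in $[0,+\infty]$.

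Next I would evaluate the inner sum by summing the geometric series over $m>n$, namely $\sum_{m=n+1}^\infty e^{-mt}=e^{-nt}/(e^t-1)$, whence
\[
\sum_{m>n\ge1}\frac{e^{-mt}}{n^q}=\frac1{e^t-1}\sum_{n\ge1}\frac{e^{-nt}}{n^q}=\frac{\Li_q(e^{-t})}{e^t-1},
\]
the last series converging for every $t>0$ since $e^{-t}<1$. Substituting back reproduces exactly equation (\ref{hf}), still as an equality of elements of $[0,+\infty]$.

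It then remains only to identify when both sides are finite, and since the previous steps present them as the \emph{same} element of $[0,+\infty]$, it suffices to analyze the iterated sum $\zt(r,q)=\sum_{m\ge1}\frac1{m^r}\sum_{n=1}^{m-1}\frac1{n^q}$. The inner partial sum is $O(1)$, $O(\log m)$, or of order $m^{1-q}$ according as $q>1$, $q=1$, or $q<1$, so the outer sum over $m$—hence $\zt(r,q)$, hence the integral in (\ref{hf})—converges precisely when $r>1$ and $r+q>2$, i.e., under the stated hypotheses. (One can equally see the integral's convergence directly from $1/(e^t-1)\sim1/t$ and $\Li_q(e^{-t})\sim\Ga(1-q)\,t^{q-1}$ as $t\to0^+$, giving an integrand of order $t^{r+q-3}$ near $0$, together with $t^{r-1}e^{-2t}$-decay at infinity.)

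I do not expect a genuine obstacle. The single point requiring care is the remark in the first paragraph that nonnegativity of the summand survives for $q<0$ (where $n^{-q}$ grows), which is exactly what lets Tonelli handle the interchange unconditionally and spares us any separate analytic-continuation argument in the variable $q$; everything else is bookkeeping with the geometric series and the Gamma integral.
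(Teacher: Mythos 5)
Your proof is correct and is essentially the paper's argument run in the opposite direction: both reduce equation (\ref{hf}) to the nonnegative double sum $\sum_{j,k\ge1}\Ga(r)k^{-q}(j+k)^{-r}=\Ga(r)\zt(r,q)$ by expanding the geometric series for $1/(e^t-1)$ and using the Gamma integral $\int_0^\infty t^{r-1}e^{-mt}\,dt=\Ga(r)/m^r$, with positivity (Tonelli) justifying the interchange. Your version merely makes explicit two points the paper leaves implicit, namely the Tonelli justification and the verification that the hypotheses $r>1$, $q>2-r$ are exactly the convergence region.
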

\begin{proof}
We have
\begin{multline*}
\frac{t^{r-1}\Li_q(e^{-t})}{e^t-1}
=\frac{e^{-t}t^{r-1}}{1-e^{-t}}\sum_{k=1}^\infty\frac{e^{-kt}}{k^q}
=\sum_{j=1}^\infty e^{-jt}\sum_{k=1}^\infty\frac{t^{r-1}e^{-kt}}{k^q}\\
=\sum_{k=1}^\infty\frac1{k^q}\sum_{j=1}^\infty t^{r-1}e^{-(j+k)t} ,
\end{multline*}
and if $r>1$, $q>2-r$ we can integrate to get the convergent series
\[
\int_0^\infty \frac{t^{r-1}\Li_q(e^{-t})}{e^t-1}dt=\sum_{k=1}^\infty\frac1{k^q}
\sum_{j=1}^\infty \int_0^\infty t^{r-1}e^{-(j+k)t}dt=
\sum_{k,j\ge 1}\frac{\Ga(r)}{k^q(j+k)^r} ,
\]
from which the result follows.
\end{proof}
\par\noindent
{\bf Remark.} It is interesting to compare equation (\ref{hf}) with 
the Arakawa-Kaneko zeta function \cite{AK}
\[
\xi_k(s)=\frac1{\Ga(s)}\int_0^\infty\frac{t^{s-1}\Li_k(1-e^{-t})}{e^t-1}dt ,
\]
which in the case where $k,s$ are positive integers is known \cite{O}
to be equal to the $s$-fold sum
\[
\sum_{n_1\ge n_2\ge\cdots\ge n_s\ge 1}\frac1{n_1^{k+1}n_2\cdots n_s} .
\]
\par
We now use the lemma to give a generalized version of formula 
(\ref{kk1}).
\begin{prop}
For real $k>1$,
\begin{multline*}
\sum_{n=1}^\infty\left(\zt(k)-\sum_{i=1}^n\frac1{i^k}\right)
\left(\zt(k+1)-\sum_{j=1}^n\frac1{j^{k+1}}\right)=\\
\frac12\zt(k)^2+\frac12\zt(2k)-\zt(k)\zt(k+1)+
\frac1{\Ga(k+1)}\int_0^\infty\frac{t^k\Li_{k-1}(e^{-t})}{e^t-1}dt .
\end{multline*}
\end{prop}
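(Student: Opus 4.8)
The plan is to deduce the Proposition from Theorem~\ref{sym}, the product relation (\ref{prod}), and the lemma just proved. Specialize Theorem~\ref{sym} to $p=k+1$, $q=k$: since $k>1$ we have $p,q>1$ and $p+q=2k+1>3$, so the hypotheses hold, and the left-hand side of the Proposition — which is exactly the product in Theorem~\ref{sym} for these values, the product being symmetric in the two tails — equals
\[
\zt(k+1,k-1)+\zt(k,k)+\zt(2k)-\zt(k)\zt(k+1).
\]

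Next I would massage two of these four terms. Applying (\ref{prod}) with $m=n=k$ gives $\zt(k)^2=2\zt(k,k)+\zt(2k)$, hence $\zt(k,k)=\frac12\zt(k)^2-\frac12\zt(2k)$; substituting turns the display into $\zt(k+1,k-1)+\frac12\zt(k)^2+\frac12\zt(2k)-\zt(k)\zt(k+1)$, which is the claimed right-hand side except for the identification of $\zt(k+1,k-1)$ with the integral. But that identification is precisely equation (\ref{hf}) evaluated at $r=k+1$ and $q=k-1$: the lemma's conditions $r>1$ and $q>2-r$ then read $k>0$ and $k-1>1-k$, i.e.\ $k>1$, both of which hold. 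Chaining the identities completes the proof.

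I do not expect a genuine analytic obstacle: the lemma already secures absolute convergence of $\int_0^\infty t^k\Li_{k-1}(e^{-t})(e^t-1)^{-1}\,dt$ and of the auxiliary double series over the full range $k>1$, including the delicate band $1<k\le 2$ where $\Li_{k-1}(e^{-t})$ is singular at the origin. The one point that needs attention is index bookkeeping: of the two double zeta values produced by Theorem~\ref{sym}, it is $\zt(k+1,k-1)$ — not $\zt(k,k)$ — that matches (\ref{hf}) with exponent $k$ on $t$ and polylogarithm $\Li_{k-1}$, while $\zt(k,k)$ is the term reduced by (\ref{prod}). As a consistency check, taking $k$ to be an integer $\ge 2$ and passing from (\ref{hf}) back to the form (\ref{fr}) via $x=e^{-t}$ recovers the integral $\frac{(-1)^k}{k!}\int_0^1\log^k(x)\Li_{k-1}(x)(1-x)^{-1}\,dx$ appearing in formula (\ref{kk1}), confirming the remark that (\ref{kk1}) already holds at $k=2$.
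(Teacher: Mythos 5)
Your proof is correct and follows essentially the same route as the paper's: specialize Theorem \ref{sym} to the pair $(k,k+1)$, reduce $\zt(k,k)$ via the product relation (\ref{prod}), and identify $\zt(k+1,k-1)$ with the integral by equation (\ref{hf}) at $r=k+1$, $q=k-1$ (whose hypotheses reduce exactly to $k>1$). Nothing further is needed.
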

\begin{proof} Equation (\ref{hf}) implies
\[
\zt(k+1,k-1)=\frac1{\Ga(k+1)}\int_0^\infty \frac{t^k\Li_{k-1}(e^{-t})}{e^t-1}dt
\]
for $k>1$.  Since Theorem \ref{sym} above gives
\begin{multline*}
\sum_{n=1}^\infty\left(\zt(k)-\sum_{i=1}^n\frac1{i^k}\right)
\left(\zt(k+1)-\sum_{j=1}^n\frac1{j^{k+1}}\right)=\\
\zt(k,k)+\zt(k+1,k-1)+\zt(2k)-\zt(k)\zt(k+1) 
\end{multline*}
for $k>1$, the conclusion follows since 
$\zt(k,k)+\zt(2k)=\frac12[\zt(k)^2+\zt(2k)]$
(a case of equation (\ref{prod})).
\end{proof}
There is a similar result that comes from setting $p=k$ in 
Corollary \ref{psq} and using equation (\ref{hf}).
\begin{prop}
For real $k>\frac32$, 
\[
\sum_{n=1}^\infty\left(\zt(k)-\sum_{i=1}^n\frac1{i^k}\right)^2=
\zt(2k-1)-\zt(k)^2+\frac2{\Ga(k)}\int_0^\infty \frac{t^{k-1}\Li_{k-1}(e^{-t})}{e^t-1}
dt .
\]
\end{prop}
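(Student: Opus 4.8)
The plan is to run exactly the same argument as in the preceding Proposition, but starting from Corollary \ref{psq} in place of Theorem \ref{sym}. So the first step is simply to invoke Corollary \ref{psq} with $p=k$: for $k>\frac32$ one has
\[
\sum_{n=1}^\infty\left(\zt(k)-\sum_{i=1}^n\frac1{i^k}\right)^2
=2\zt(k,k-1)+\zt(2k-1)-\zt(k)^2 .
\]
This already has the right shape — it only remains to convert the double zeta value $\zt(k,k-1)$ into the claimed integral.

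The second step is to apply equation (\ref{hf}) with $r=k$ and $q=k-1$. Here I would pause to check that the hypotheses of the Lemma are met: we need $r>1$, which holds since $k>\frac32>1$, and $q>2-r$, i.e.\ $k-1>2-k$, which is exactly the condition $k>\frac32$. Thus
\[
\zt(k,k-1)=\frac1{\Ga(k)}\int_0^\infty\frac{t^{k-1}\Li_{k-1}(e^{-t})}{e^t-1}\,dt
\]
for all $k>\frac32$, and substituting this into the displayed identity above gives the stated formula directly, with the factor $2$ from the coefficient of $\zt(k,k-1)$.

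I do not anticipate a genuine obstacle here; the one point that needs a moment's care is the bookkeeping on the domain of validity, namely that the constraint $q>2-r$ from the Lemma collapses, under the specialization $r=k$, $q=k-1$, to precisely the hypothesis $k>\frac32$ already imposed by Corollary \ref{psq} — so the two ranges match and no extra restriction on $k$ is introduced. Everything else is a direct substitution, entirely parallel to the proof of the previous Proposition (where the roles were played by Theorem \ref{sym} and the value $\zt(k+1,k-1)$).
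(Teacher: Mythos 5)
Your proposal is correct and is exactly the argument the paper intends (the paper itself only remarks that the result ``comes from setting $p=k$ in Corollary \ref{psq} and using equation (\ref{hf})''). Your check that the Lemma's hypothesis $q>2-r$ specializes, for $r=k$, $q=k-1$, to precisely $k>\frac32$ is the only point needing care, and you handled it correctly.
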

\par
We now examine some special cases.
If $p=2$, Corollary \ref{psq} gives
\begin{equation}
\label{2sp2}
\sum_{n=1}^\infty\left(\zt(2)-\sum_{j=1}^n\frac1{j^2}\right)^2=
2\zt(2,1)+\zt(3)-\zt(2)^2=3\zt(3)-\frac52\zt(4),
\end{equation}
using the identity $\zt(2,1)=\zt(3)$ implied
by equations (\ref{2sumt}) or (\ref{n1red}).
Cf. \cite[Prob. 3.22]{F} and \cite[Theorem 1(a)]{FS}.
For $(p,q)=(3,2)$ Theorem \ref{sym} gives 
\begin{multline}
\label{e23}
\sum_{n=1}^\infty\left(\zt(3)-\sum_{j=1}^n\frac1{j^3}\right)
\left(\zt(2)-\sum_{k=1}^n\frac1{k^2}\right)=
\zt(3,1)+\zt(2,2)+\zt(4)-\zt(2)\zt(3)\\
=2\zt(4)-\zt(2)\zt(3),
\end{multline}
where we have used equation (\ref{2sumt}) with $n=4$.
This agrees with \cite[Theorem 1]{FV}, and also with equation 
(\ref{kk1}) as extended to the case $k=2$.
For $p=3$ equation (\ref{pp3}) is
\[
\sum_{n=1}^\infty\left(\zt(3)-\sum_{j=1}^n\frac1{j^3}\right)^2=
-10\zt(5)+6\zt(3)\zt(2)-\zt(3)^2;
\]
cf. \cite[Cor. 2]{FS}.
In the case $(p,q)=(4,3)$ Theorem \ref{sym} gives (cf. \cite[Cor. 2]{FV})
\begin{multline*}
\sum_{n=1}^\infty\left(\zt(4)-\sum_{j=1}^n\frac1{j^4}\right)
\left(\zt(3)-\sum_{k=1}^n\frac1{k^3}\right)=
\zt(4,2)+\zt(3,3)+\zt(6)-\zt(3)\zt(4)\\
=-\frac56\zt(6)+\frac32\zt(3)^2-\zt(3)\zt(4) .
\end{multline*}
\section{Triple and higher products}
We now consider the sum of triple products
\begin{multline}
\label{3sum}
\sum_{n=1}^\infty \left(\zt(p)-\sum_{i=1}^n\frac1{i^p}\right)
\left(\zt(q)-\sum_{j=1}^n \frac1{j^q}\right) 
\left(\zt(r)-\sum_{k=1}^n \frac1{k^r}\right) =\\
\sum_{n=1}^\infty\sum_{i=n+1}^\infty\sum_{j=n+1}^\infty\sum_{k=n+1}^\infty\frac1{i^pj^qk^r} .
\end{multline}
The result is as follows.
\begin{thm}
\label{triple}
If $p,q,r>1$ and $p+q+r>4$, then the sum (\ref{3sum}) is
\begin{multline*}
\zt(p,q,r-1)+\zt(p,r,q-1)+\zt(q,p,r-1)+\zt(q,r,p-1)+\zt(r,p,q-1)\\
+\zt(r,q,p-1)+\zt(p+q,r-1)+\zt(p+r,q-1)+\zt(q+r,p-1)+\zt(p,q+r-1)\\
+\zt(q,p+r-1)+\zt(r,p+q-1)+\zt(p+q+r-1)-\zt(p)\zt(q)\zt(r) .
\end{multline*}
\end{thm}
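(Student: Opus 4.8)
The plan is to imitate the proof of Theorem~\ref{sym}, carrying the same bookkeeping one dimension higher. The hypotheses $p,q,r>1$ (making each tail converge) and $p+q+r>4$ together make the quadruple series on the right of (\ref{3sum}) converge absolutely, so I may interchange the order of summation. For fixed $i,j,k\ge1$ the constraint $n+1\le i,j,k$ holds for exactly $\min\{i,j,k\}-1$ positive integers $n$, and since this count is $0$ whenever one of the variables equals $1$, the right-hand side of (\ref{3sum}) becomes $\sum_{i,j,k\ge1}\frac{\min\{i,j,k\}-1}{i^pj^qk^r}$.

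Next I would split this index set according to the relative order of $i,j,k$ into thirteen classes: the diagonal $i=j=k$; six classes in which exactly two of the variables coincide (value $m$, say) and the third is either strictly larger than $m$ or strictly smaller; and six classes given by the six strict orderings of three distinct values. On each class $\min\{i,j,k\}$ is one of the variables, and writing the factor $\frac{m-1}{m^{w}}$ attached to that minimal variable as $\frac1{m^{w-1}}-\frac1{m^{w}}$ (with $w$ its total exponent) turns the class into a difference of a ``shifted'' multiple zeta value — the one with the exponent of the minimal variable lowered by $1$ — and an ``unshifted'' one. Collecting all thirteen shifted values reproduces precisely the thirteen $\zt$-terms of the statement: the six depth-three terms come from the six strict orderings; the three terms $\zt(p+q,r-1),\zt(p+r,q-1),\zt(q+r,p-1)$ arise when the coincident pair is the minimum; the three terms $\zt(p,q+r-1),\zt(q,p+r-1),\zt(r,p+q-1)$ arise when the isolated variable is the minimum; and $\zt(p+q+r-1)$ comes from the diagonal. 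The thirteen unshifted values, taken with their minus signs, sum to $-\bigl[\zt(p,q,r)+\zt(p,r,q)+\zt(q,p,r)+\zt(q,r,p)+\zt(r,p,q)+\zt(r,q,p)+\zt(p+q,r)+\zt(r,p+q)+\zt(p+r,q)+\zt(q,p+r)+\zt(q+r,p)+\zt(p,q+r)+\zt(p+q+r)\bigr]$, and this bracket is exactly $\zt(p)\zt(q)\zt(r)$: it is the expansion of the product of the three defining series, sorted by the order type of the triple $(i,j,k)$, i.e.\ a double application of (\ref{prod}). Adding the two groups yields the claimed formula.

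The only delicate point is the bookkeeping across the thirteen cases, and in particular keeping the arguments of each multiple zeta value in the correct order, since in $\zt(i_1,\dots,i_k)$ the first argument belongs to the \emph{largest} summation index; I would record this in a short table rather than write out all thirteen reductions. I expect no genuine obstacle beyond that: the binding convergence condition is exactly $p+q+r>4$, which is what is needed for the six depth-three terms $\zt(p,q,r-1)$, \dots\ to converge (each of the twelve lower-depth terms needs only $p+q+r>3$), so the hypotheses are tight and nothing further need be imposed. The same argument extends to products of $k$ tails, producing a sum of multiple zeta values of depths $1$ through $k$ together with the correction $-\zt(p_1)\cdots\zt(p_k)$.
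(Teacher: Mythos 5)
Your argument is the same as the paper's: rewrite the sum as $\sum_{i,j,k\ge1}(\min\{i,j,k\}-1)/(i^pj^qk^r)$, split over the thirteen weak orderings of $(i,j,k)$, telescope each class into a ``shifted'' minus ``unshifted'' multiple zeta value, and recognize the thirteen unshifted terms as the expansion of $\zt(p)\zt(q)\zt(r)$; this is correct, and the convergence remarks are right. One slip in your bookkeeping paragraph: the terms $\zt(p+q,r-1),\zt(p+r,q-1),\zt(q+r,p-1)$ come from the classes where the \emph{isolated} variable is the minimum (e.g.\ $i=j>k$ gives $\sum_{j>k\ge1}(k-1)/(j^{p+q}k^r)=\zt(p+q,r-1)-\zt(p+q,r)$), while $\zt(p,q+r-1),\zt(q,p+r-1),\zt(r,p+q-1)$ come from the classes where the \emph{coincident pair} is the minimum --- you have these two attributions reversed, though since both triples appear in the final sum this does not affect the stated result.
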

\begin{proof}
Write the sum (\ref{3sum}) as
\begin{equation}
\label{sumijk}
\sum_{i,j,k\ge 1}\frac{\min\{i,j,k\}-1}{i^pj^qk^r}
\end{equation}
and consider all possible ``weak orderings'' of $i,j,k$ 
(i.e., ties are allowed, e.g., $i>j>k$ or $i=j>k$); there are thirteen 
possibilities in all.
These fall into four natural classes.  First, there are six
orderings with $i,j,k$ all unequal.  Second, there are three orderings
with two of $i,j,k$ equal and less than the other value; and third, 
there are three orderings with two of $i,j,k$ equal and greater than 
the other value.  Finally, there is one ordering with $i=j=k$.
For each ordering the corresponding terms in (\ref{sumijk}) 
add up to the difference of two multiple zeta values.
For example, $i>j>k$ contributes
\[
\sum_{i>j>k\ge 1}\frac{k-1}{i^pj^qk^r}=\zt(p,q,r-1)-\zt(p,q,r) ,
\]
and together with the other five orderings in the same class we have
\begin{multline}
\label{111}
\zt(p,q,r-1)+\zt(p,r,q-1)+\zt(q,p,r-1)+\zt(q,r,p-1)+\zt(r,p,q-1)\\
+\zt(r,q,p-1)
-\zt(p,q,r)-\zt(p,r,q)-\zt(q,p,r)-\zt(q,r,p)-\zt(r,p,q)-\zt(r,q,p) .
\end{multline}
Similarly, $i=j>k$ contributes 
\[
\sum_{j>k\ge 1}\frac{k-1}{j^{p+q}k^r}=\zt(p+q,r-1)-\zt(p+q,r) ,
\]
and together with the other two orderings in the same class we have
\begin{equation}
\label{21}
\zt(p+q,r-1)+\zt(p+r,q-1)+\zt(q+r,p-1)-\zt(p+q,r)-\zt(p+r,q)-\zt(q+r,p) .
\end{equation}
The ordering $i>j=k$ contributes
\[
\sum_{i>j\ge 1}\frac{j-1}{i^pj^{q+r}}=\zt(p,q+r-1)-\zt(p,q+r) ,
\]
which together with the other two orderings in the same class gives
\begin{equation}
\label{12}
\zt(p,q+r-1)+\zt(q,p+r-1)+\zt(r,p+q-1)-\zt(p,q+r)-\zt(q,p+r)-\zt(r,p+q).
\end{equation}
Finally, the ordering $i=j=k$ contributes
\begin{equation}
\label{3}
\sum_{i=1}^\infty \frac{i-1}{i^{p+q+r}}=\zt(p+q+r-1)-\zt(p+q+r) .
\end{equation}
When (\ref{111}), (\ref{21}), (\ref{12}), and (\ref{3}) are combined
the thirteen negative terms add up to $\zt(p)\zt(q)\zt(r)$, giving the
conclusion.
\end{proof}
In the case $p=q=r=2$, Theorem \ref{triple} gives
\begin{equation}
\label{2sp3}
\sum_{n=1}^\infty\left(\zt(2)-\sum_{i=1}^n\frac1{i^2}\right)^3=
6\zt(2,2,1)+3\zt(4,1)+3\zt(2,3)+\zt(5)-\zt(2)^3 .
\end{equation}
The right-hand side of this equation can be simplified using the
properties discussed in \S2.  First, by the duality property (\ref{dual}),
$\zt(2,2,1)=\zt(3,2)$.  Then we can write the sum as 
\begin{multline*}
3\zt(3,2)+3[\zt(4,1)+\zt(3,2)+\zt(2,3)]+\zt(5)-\zt(2)^3=
3\zt(3,2)+4\zt(5)-\zt(2)^3\\
=9\zt(2)\zt(3)-\frac{25}2\zt(5)-\frac{35}8\zt(6),
\end{multline*}
where we used equation (\ref{2sumt}) and then relations that follow
from (\ref{binom}).
Theorem \ref{triple} in the case $(p,q,r)=(3,2,2)$ is
\begin{multline*}
\sum_{n=1}^\infty\left(\zt(2)-\sum_{i=1}^n\frac1{i^2}\right)^2
\left(\zt(3)-\sum_{j=1}^n\frac1{j^3}\right)=
2\zt(3,2,1)+2\zt(2,3,1)\\
+2\zt(2,2,2)+2\zt(5,1)+2\zt(2,4)+\zt(3,3)+\zt(4,2)
+\zt(6)-\zt(2)^2\zt(3)\\
=\frac76\zt(6)+\frac32\zt(3)^2-\zt(2)^2\zt(3) ,
\end{multline*}
and in the case $(p,q,r)=(3,3,2)$ is
\begin{multline*}
\sum_{n=1}^\infty\left(\zt(2)-\sum_{i=1}^n\frac1{i^2}\right)
\left(\zt(3)-\sum_{j=1}^n\frac1{j^3}\right)^2=
2\zt(3,3,1)+2\zt(3,2,2)\\
+2\zt(2,3,2)+\zt(6,1)+2\zt(5,2)+2\zt(3,4)+\zt(2,5)
+\zt(7)-\zt(2)\zt(3)^2\\
=
\frac{77}8\zt(7)+3\zt(2)^2\zt(3)-10\zt(2)\zt(5)-\zt(2)\zt(3)^2 .
\end{multline*}
\par
Theorems \ref{sym} and \ref{triple} are special cases of a result 
for $k$-fold products, but the number of terms in the general case is one 
plus the number of weak orderings on $k$ objects \cite[A000670]{S},
which increases rapidly with $k$.
We can state it concisely by introducing some more notation.  Let $\Si_k$ be
the symmetric group on $k$ letters, $\C_k$ the set of compositions of $k$.
For a sequence $I=(i_1,\dots,i_k)$ of positive real numbers and an
element $J=(j_1,\dots,j_p)\in\C_k$, let 
\[
\zt(I,J)=\frac{\zt(J_1(I),J_2(I),\dots,J_{p-1}(I),J_p(I))}{j_1!\cdots j_p!}
\]
and
\[
\hat\zt(I,J)=\frac{\zt(J_1(I),J_2(I),\dots,J_{p-1}(I),J_p(I)-1)}
{j_1!\cdots j_p!},
\]
where $J_c(I)=i_{j_1+\dots+j_{c-1}+1}+\dots+i_{j_1+\dots+j_c}$ for $c=1,\dots,p$.
The following lemma generalizes the product formula (\ref{prod}).
\begin{lem}
\label{mprod}
For $i_1,\dots,i_k>1$,
\[
\prod_{j=1}^k\zt(i_j)=\sum_{\si\in\Si_k}\sum_{J\in\C_k}
\zt((i_{\si(1)},\dots,i_{\si(k)}),J) .
\]
\end{lem}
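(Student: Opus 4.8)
The plan is to expand the left-hand side as the multiple series
\[
\prod_{j=1}^k\zt(i_j)=\sum_{n_1,\dots,n_k\ge 1}\frac1{n_1^{i_1}\cdots n_k^{i_k}},
\]
which converges absolutely since each $i_j>1$, and then to partition its index set according to the ``weak ordering'' of the coordinates $(n_1,\dots,n_k)$, exactly as in the proofs of Theorems \ref{sym} and \ref{triple}. A weak ordering of $\{1,\dots,k\}$ is the same data as an ordered set partition $(B_1,\dots,B_p)$ of $\{1,\dots,k\}$: the blocks record which indices carry equal values of $n$, and the ordering records their relative size, with $n_j=m_c$ for $j\in B_c$ and $m_1>m_2>\dots>m_p\ge 1$. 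Because each $i_j>1$, every block sum $\sum_{j\in B_c}i_j$ exceeds $1$, so all the multiple zeta values below converge and the rearrangement is valid. Collecting the terms belonging to one ordered set partition gives
\[
\sum_{m_1>\dots>m_p\ge 1}\prod_{c=1}^p\frac1{m_c^{\sum_{j\in B_c}i_j}}=\zt\Bigl(\sum_{j\in B_1}i_j,\dots,\sum_{j\in B_p}i_j\Bigr),
\]
and hence
\[
\prod_{j=1}^k\zt(i_j)=\sum_{(B_1,\dots,B_p)}\zt\Bigl(\sum_{j\in B_1}i_j,\dots,\sum_{j\in B_p}i_j\Bigr),
\]
the sum running over all ordered set partitions of $\{1,\dots,k\}$.

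The second part of the plan is to recognize the double sum in the statement as another expression for this same quantity. Given $\si\in\Si_k$ and $J=(j_1,\dots,j_p)\in\C_k$, split the permuted sequence $(i_{\si(1)},\dots,i_{\si(k)})$ into consecutive blocks of lengths $j_1,\dots,j_p$; this yields an ordered set partition of $\{1,\dots,k\}$ whose $c$-th block has cardinality $j_c$ and whose $c$-th block sum is precisely $J_c((i_{\si(1)},\dots,i_{\si(k)}))$. I would then verify the converse count: a fixed ordered set partition $(B_1,\dots,B_p)$ arises in this way from exactly those pairs $(\si,J)$ with $J=(|B_1|,\dots,|B_p|)$ and with $\si$ carrying the $c$-th block of consecutive integers bijectively onto $B_c$ for every $c$, so there are $|B_1|!\cdots|B_p|!$ such permutations for that $J$. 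Summing $\zt((i_{\si(1)},\dots,i_{\si(k)}),J)$ over all $(\si,J)$ therefore produces, for each ordered set partition, the term $\zt(\sum_{j\in B_1}i_j,\dots,\sum_{j\in B_p}i_j)$ with multiplicity $|B_1|!\cdots|B_p|!$, which cancels the factorial denominator $j_1!\cdots j_p!=|B_1|!\cdots|B_p|!$ built into the definition of $\zt(I,J)$; comparison with the previous display then gives the lemma.

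The only genuine work is the combinatorial bookkeeping of the last paragraph — matching weak orderings with $(\si,J)$-pairs carrying the correct multiplicity — and I expect that to be the main (though modest) obstacle; everything else is just a rearrangement of an absolutely convergent series. As a sanity check one can take $k=2$, where $\Si_2\times\C_2$ has four elements and the sum collapses to $\zt(i_1+i_2)+\zt(i_1,i_2)+\zt(i_2,i_1)$, recovering the product formula (\ref{prod}).
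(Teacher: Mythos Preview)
Your proposal is correct and follows essentially the same route as the paper's proof: expand the product as an absolutely convergent multiple series, partition the index set by weak orderings (equivalently, ordered set partitions), and then check that summing $\zt((i_{\si(1)},\dots,i_{\si(k)}),J)$ over $\Si_k\times\C_k$ reproduces each weak-ordering term with the right factorial multiplicity. Your use of ordered set partitions $(B_1,\dots,B_p)$ makes the multiplicity count slightly more explicit than the paper's phrasing, but the argument is the same.
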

\begin{proof}
The left-hand side is
\begin{equation}
\label{psum}
\sum_{m_1,\dots,m_k\ge 1}\frac1{m_1^{i_1}\cdots m_k^{i_k}} .
\end{equation}
Now consider all possible weak orderings on $\{m_1,\dots,m_k\}$.
As in the proof of Theorem \ref{triple}, these fall into natural
classes which correspond to compositions of $k$.  
Given $(j_1,\dots,j_p)\in\C_k$, any ordering in the corresponding
class can be obtained by applying a permutation of $\{1,\dots,k\}$ to 
\[
m_1=\dots=m_{j_1}<m_{j_1+1}=\dots=m_{j_1+j_2}<\dots<m_{j_1+\dots+j_{p-1}+1}=\dots=m_k .
\]
But swaps of equal elements don't matter, so the number of 
distinct weak orderings in the class of $(j_1,\dots,j_p)$ is
\[
\frac{k!}{j_1!\cdots j_p!} .
\]
Evidently this class contributes 
\[
\sum_{\si\in\Si_k}\zt((i_{\si(1)},\dots,i_{\si(k)}),(j_1,\dots,j_p))
\]
to the sum (\ref{psum}), and the result follows by summing over $\C_k$.
\end{proof}
Our general result for $k$-fold products is as follows.
\begin{thm}
\label{kfold}
For $i_1,i_2,\dots,i_k>1$ such that $i_1+\dots+i_k>k+1$,
\[
\sum_{n=1}^\infty\prod_{j=1}^k\left(\zt(i_j)-\sum_{m=1}^n\frac1{m^{i_j}}\right)=
\sum_{\si\in\Si_k}\sum_{J\in\C_k}\hat\zt((i_{\si(1)},\dots,i_{\si(k)}),J)
-\prod_{j=1}^k\zt(i_j) .
\]
\end{thm}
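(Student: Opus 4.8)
The plan is to follow the pattern of the proofs of Theorems \ref{sym} and \ref{triple}, using Lemma \ref{mprod} to handle the combinatorics. First I would write each factor as a tail, $\zt(i_j)-\sum_{m=1}^n m^{-i_j}=\sum_{m>n}m^{-i_j}$, so that the left-hand side equals
\[
\sum_{n=1}^\infty\sum_{m_1>n}\cdots\sum_{m_k>n}\frac1{m_1^{i_1}\cdots m_k^{i_k}} .
\]
All terms are positive, so the order of summation may be reversed; for fixed $(m_1,\dots,m_k)$ the admissible values of $n$ are $1,2,\dots,\min\{m_1,\dots,m_k\}-1$, whence the left-hand side becomes
\[
\sum_{m_1,\dots,m_k\ge 1}\frac{\min\{m_1,\dots,m_k\}-1}{m_1^{i_1}\cdots m_k^{i_k}} .
\]

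Next I would split this sum according to the weak orderings of $\{m_1,\dots,m_k\}$, exactly as in the proof of Lemma \ref{mprod}. Such a weak ordering is the data of a composition $J=(j_1,\dots,j_p)\in\C_k$ together with an ordered partition of $\{1,\dots,k\}$ into blocks $B_1,\dots,B_p$ with $|B_c|=j_c$, where $B_1$ indexes the largest of the $m$'s and $B_p$ the smallest. On the region cut out by such a weak ordering, writing $n_1>n_2>\dots>n_p\ge 1$ for the distinct common values, we have $\min\{m_1,\dots,m_k\}=n_p$, so this region contributes
\[
\sum_{n_1>\dots>n_p\ge 1}\frac{n_p-1}{n_1^{s_1}\cdots n_p^{s_p}}=\zt(s_1,\dots,s_{p-1},s_p-1)-\zt(s_1,\dots,s_p),
\]
where $s_c=\sum_{l\in B_c}i_l$ and the split uses $\frac{n_p-1}{n_p^{s_p}}=n_p^{1-s_p}-n_p^{-s_p}$. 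Each ordered block partition of type $J$ arises from exactly $j_1!\cdots j_p!$ permutations $\si\in\Si_k$, and $J_c((i_{\si(1)},\dots,i_{\si(k)}))=s_c$ for any such $\si$; hence summing the first of the two zeta values above over all weak orderings of type $J$ gives $\sum_{\si\in\Si_k}\hat\zt((i_{\si(1)},\dots,i_{\si(k)}),J)$, and summing the second gives $\sum_{\si\in\Si_k}\zt((i_{\si(1)},\dots,i_{\si(k)}),J)$.

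Summing these over $J\in\C_k$ then presents the left-hand side as
\[
\sum_{\si\in\Si_k}\sum_{J\in\C_k}\hat\zt((i_{\si(1)},\dots,i_{\si(k)}),J)-\sum_{\si\in\Si_k}\sum_{J\in\C_k}\zt((i_{\si(1)},\dots,i_{\si(k)}),J),
\]
and the subtracted double sum equals $\prod_{j=1}^k\zt(i_j)$ by Lemma \ref{mprod}, which is the claim. The routine point to check is convergence, needed both to reverse the order of summation and to ensure the multiple zeta values appearing are defined: since every $i_l>1$ one has $s_1+\dots+s_c>j_1+\dots+j_c\ge c$ for each $c$, while $s_1+\dots+s_{p-1}+(s_p-1)=(i_1+\dots+i_k)-1>k\ge p$ by hypothesis, so each of $\zt(s_1,\dots,s_{p-1},s_p-1)$ and $\zt(s_1,\dots,s_p)$ converges absolutely, and hence so does the rearranged sum over $(m_1,\dots,m_k)$. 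The only genuinely delicate part is keeping the correspondence between weak orderings, ordered block partitions, and the index set $\Si_k\times\C_k$ straight — in particular making sure the decremented argument $s_p-1$ is the one attached to $n_p$, to match the $-1$ in the numerator $\min\{m_1,\dots,m_k\}-1$ — but this is a direct unwinding of the definition of $\hat\zt$, entirely parallel to the explicit enumeration carried out for $k=3$ in the proof of Theorem \ref{triple}.
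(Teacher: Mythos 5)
Your proof is correct and follows essentially the same route as the paper's: rewrite the left-hand side as $\sum_{m_1,\dots,m_k\ge1}(\min\{m_1,\dots,m_k\}-1)/(m_1^{i_1}\cdots m_k^{i_k})$, decompose by weak orderings grouped into classes indexed by compositions of $k$, and cancel the negative terms via Lemma \ref{mprod}. You in fact supply more detail than the paper does, notably the explicit verification that the hypotheses $i_j>1$ and $i_1+\dots+i_k>k+1$ guarantee convergence of every multiple zeta value that appears.
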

\begin{proof}
The proof is similar in outline to that for Theorem \ref{triple}.  
First write the left-hand side as
\begin{equation}
\label{msum}
\sum_{m_1,\dots,m_k\ge 1}\frac{\min\{m_1,\dots,m_k\}-1}{m_1^{i_1}\cdots m_k^{i_k}}
\end{equation}
and separate out the terms according to all the possible
weak orderings of $m_1,\dots,m_k$.
These orderings fall into classes specified by compositions of $k$.
Given such a composition $(j_1,\dots,j_p)$, the contribution from the 
corresponding class of weak orderings to the sum (\ref{msum}) can be 
seen to be
\[
\sum_{\si\in\Si_k}\left[\hat\zt((i_{\si(1)},\dots,i_{\si(k)}),(j_1,\dots,j_p)) - 
\zt((i_{\si(1)},\dots,i_{\si(k)}),(j_1,\dots,j_p))\right] ,
\]
and the conclusion follows using Lemma \ref{mprod}.
\end{proof}
\begin{cor}
\label{rep}
For integer $k\ge 2$ and real $r>1+\frac1k$,
\[
\sum_{n=1}^\infty\left(\zt(r)-\sum_{i=1}^n\frac1{i^r}\right)^k=
\sum_{J=(j_1,\dots,j_p)\in\C_k}\binom{k}{J}\zt(rj_1,\dots,rj_{p-1},rj_p-1)-\zt(r)^k ,
\]
where $\binom{k}{J}$ is the multinomial coefficient.
\end{cor}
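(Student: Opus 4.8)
The plan is to derive Corollary~\ref{rep} directly from Theorem~\ref{kfold} by specializing to $i_1 = i_2 = \dots = i_k = r$. First I would check that the hypotheses transfer correctly: the condition $i_1 + \dots + i_k > k+1$ of Theorem~\ref{kfold} becomes $kr > k+1$, which is precisely $r > 1 + \frac1k$, while $r > 1$ guarantees $i_j > 1$ for each $j$. So the theorem applies under exactly the stated hypotheses.

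Next I would simplify the double sum on the right-hand side of Theorem~\ref{kfold}. Because all the $i_j$ equal $r$, for every $\si \in \Si_k$ the sequence $(i_{\si(1)},\dots,i_{\si(k)})$ is just $(r,\dots,r)$, so the summand $\hat\zt((i_{\si(1)},\dots,i_{\si(k)}),J)$ is independent of $\si$ and the sum over $\Si_k$ merely multiplies it by $k!$. For a composition $J = (j_1,\dots,j_p) \in \C_k$ the block sums reduce to $J_c(I) = \underbrace{r + \dots + r}_{j_c} = r j_c$, so the definition of $\hat\zt$ gives
\[
\hat\zt((r,\dots,r),J) = \frac{\zt(rj_1,\dots,rj_{p-1},\,rj_p-1)}{j_1!\cdots j_p!}.
\]
Multiplying by the factor $k!$ from the $\Si_k$-sum converts the denominator into the multinomial coefficient $\binom{k}{J} = \frac{k!}{j_1!\cdots j_p!}$. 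Since also $\prod_{j=1}^k \zt(i_j) = \zt(r)^k$, assembling these observations yields exactly the claimed identity.

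The only point that genuinely needs checking — the closest thing to an obstacle, though it is minor — is that each multiple zeta value $\zt(rj_1,\dots,rj_{p-1},rj_p-1)$ on the right is convergent in the sense of \S2, i.e., that $\Re$ of every initial partial sum of its arguments exceeds the corresponding index. For an initial block with $c < p$ the relevant partial sum is $r(j_1 + \dots + j_c) \ge rc > c$ since $r > 1$; in particular the leading argument is $rj_1 \ge r > 1$. For the sum of all $p$ arguments we get $r(j_1 + \dots + j_p) - 1 = rk - 1$, and $rk - 1 > k \ge p$ by the hypothesis $r > 1 + \frac1k$. Hence every term on the right is well-defined, and no further work is required: all the substance lies in Theorem~\ref{kfold} and Lemma~\ref{mprod}.
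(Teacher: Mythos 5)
Your proof is correct and is precisely the intended derivation: the paper states Corollary \ref{rep} as an immediate specialization of Theorem \ref{kfold} to $i_1=\dots=i_k=r$, with the $k!$ from the $\Si_k$-sum combining with $1/(j_1!\cdots j_p!)$ to give the multinomial coefficient. Your additional verification of the convergence conditions on each $\zt(rj_1,\dots,rj_p-1)$ is a worthwhile bonus the paper leaves implicit.
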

For $r=2$, the first two cases of Corollary \ref{rep} are equations 
(\ref{2sp2}) and (\ref{2sp3}); the next is
\begin{multline*}
\sum_{n=1}^\infty\left(\zt(2)-\sum_{i=1}^n\frac1{i^2}\right)^4=
24\zt(2,2,2,1)+12\zt(4,2,1)+12\zt(2,4,1)\\
+12\zt(2,2,3)+4\zt(6,1)+4\zt(2,5)+6\zt(4,3)+\zt(7)-\zt(2)^4 \\
=-\frac{301}4\zt(7)+10\zt(2)\zt(5)+\frac{102}5\zt(3)\zt(2)^2
-\frac{175}{24}\zt(8).
\end{multline*}
\section*{Acknowledgement}
The author thanks the referee for his careful reading of the manuscript,
and for bringing several references to his attention.


\begin{thebibliography}{9}
\bibitem{AK}
T. Arakawa and M. Kaneko, Multiple zeta values, poly-Bernoulli numbers,
and related zeta functions, \emph{Nagoya Math. J.} {\bf 153} (1999), 189-209.
\bibitem{E}
L. Euler, Meditationes circa singulare serierum genus, \emph{Novi Comm.
Acad. Sci. Petropol.} {\bf 20} (1776), 140-186; reprinted in 
\emph{Opera Omnia}, Ser. I, Vol. 15, B. G. Teubner, Berlin, 1927, 
pp. 217-267.
\bibitem{Fr}
P. Freitas, Integrals of polylogarithmic functions, recurrence relations,
and associated Euler sums, \emph{Math. Comp.} {\bf 74} (2005), 1425-1440.
\bibitem{F}
O. Furdui, \emph{Limits, Series and Fractional Part Integrals: Problems
in Mathematical Analysis}, Problem Books in Mathematics, Springer, London, 
2013.
\bibitem{FS}
O. Furdui and A. S\^int\u am\u arian, Quadratic series involving the tail
of $\zt(k)$, \emph{Integral Transforms Spec. Func.} {\bf 26} (2015), 1-8.
\bibitem{FV}
O. Furdui and C. V\u alean, Evaluation of series involving the product
of the tail of $\zt(k)$ and $\zt(k+1)$, \emph{Mediterr. J. Math.},
DOI 10.1007/s00009-014-0508-9, December 2014.
\bibitem{G}
A. Granville, A decomposition of Riemann's zeta function, in 
\emph{Analytic Number Theory}, Y. Motohashi (ed.), London Math. Soc.
Lecture Note Ser. 247, Cambridge University Press, New York, 1997, pp. 95-101.
\bibitem{H}
M. E. Hoffman, Multiple harmonic series, \emph{Pacific J. Math.} {\bf 152}
(1992), 275-290.
\bibitem{M}
K. Matsumoto, On the analytic continuation of various multiple
zeta-functions, in \emph{Number Theory for the Millennium, Vol. II 
(Urbana, 2000)}, M. A. Bennett {\it et. al.} (eds.), A. K. Peters, 
Natick, Mass., 2002, pp. 417-440.
\bibitem{O}
Y. Ohno, A generalization of the duality and sum formulas on the
multiple zeta values, \emph{J. Number Theory} {\bf 74} (1999), 39-43.
\bibitem{S}
N. J. A. Sloane, Online Encyclopedia of Integer Sequences,
{\tt oies.org}.
\bibitem{Z}
D. Zagier, Values of zeta functions and their applications, in
\emph{First European Congress of Mathematics, Vol. II (Paris, 1992)},
Birkh\"auser, Boston, 1994, pp. 497-512.
\end{thebibliography}
\end{document}